\documentclass[11pt,oneside, reqno]{amsart}
\usepackage[top=25mm, bottom=25mm, left=25mm, right=25mm]{geometry}
\usepackage{amsfonts,amssymb,amsmath,amsthm,amsxtra}
\usepackage[T1]{fontenc}
\usepackage[utf8]{inputenc}
\usepackage{mathtools}
\usepackage{dsfont}
\usepackage{enumitem}

\numberwithin{equation}{section}
\newtheorem{theorem}[equation]{Theorem}

\newtheorem{lemma}[equation]{Lemma}
\newtheorem{proposition}[equation]{Proposition}

\theoremstyle{definition}

\begin{document}

\title[A remark on Kolmogorov's theorem]{A remark on Kolmogorov's theorem}

\author{Arash Lotfi}
\address[Arash Lotfi]{
Department of Mathematics,
Rutgers University,
Piscataway, NJ 08854-8019, USA}
\email{arashlotfi92@gmail.com}

\author{Mariusz Mirek}
\address[Mariusz Mirek]{
Department of Mathematics,
Rutgers University,
Piscataway, NJ 08854-8019, USA 
\&
Instytut Matematyczny,
Uniwersytet Wroc{\l}awski,
Plac Grunwaldzki 2/4,
50-384 Wroc{\l}aw,
Poland}
\email{mariusz.mirek@rutgers.edu}

\author{Richard K. Yu}
\address[Richard K. Yu]{
Department of Mathematics,
Rutgers University,
Piscataway, NJ 08854-8019, USA}
\email{rky8@scarletmail.rutgers.edu}

\thanks{The authors were partially supported  by the NSF CAREER grant (DMS-2236493).}

\begin{abstract}
The aim of this note is to illustrate that the set of integrable functions on the torus $\mathbb{T}$ for which the Fourier series diverges almost everywhere is of the second Baire category in $L^1(\mathbb{T})$.
\end{abstract}

\date{\today}

\maketitle

\section{Introduction}
\subsection{A very brief history} In 1923, Kolmogorov \cite{Kol1}, as
a nineteen-year-old student, constructed an integrable function on the
torus $\mathbb T:=\mathbb R/\mathbb Z$ whose Fourier series diverges
almost everywhere. Three years later, Kolmogorov \cite{Kol2} improved
on this by creating an integrable function on $\mathbb T$ whose
Fourier series diverges everywhere on $\mathbb T$.  Kolmogorov's seminal papers \cite{Kol1, Kol2} initiated new directions of research in
Fourier series and engendered an extensive literature on diverging Fourier series. An important simplification and several refinements of Kolmogorov's examples appear in Stein's article \cite[Section 4, p.143]{S}, which also covers the case of the Walsh--Paley series. For a comprehensive historical background and a thorough bibliography on the subject, we refer to Ul'yanov's survey \cite{U} as well as to the books of Zygmund \cite[Chapter VIII]{Z}
and Grafakos \cite[Section 4.2]{Grafakos1}. In particular, Grafakos'
books \cite{Grafakos1, Grafakos2} offer a modern treatment to the
subject.

Kolmogorov's results arose from his attempt to disprove Luzin's conjecture from 1913, which asserts that the partial sums of the Fourier series of a
square-integrable function on $\mathbb T$ converge almost everywhere. At that time, there was a strong belief that Luzin's conjecture was likely false. 

However, four decades after Kolmogorov's papers,  Carleson  \cite{Carleson} proved Luzin's conjecture in the affirmative for $L^2(\mathbb T)$
functions and Hunt \cite{Hunt} subsequently extended this result to all
$L^p(\mathbb T)$ functions with $p \in (1, \infty)$. We refer to
Fefferman's paper \cite{Fef} and to Lacey and Thiele's paper \cite{LT1} for
different proofs of Carleson's theorem, as well as to Grafakos
\cite[Section 6]{Grafakos2}, where the Carleson--Hunt theorem is
presented in great detail for $L^p(\mathbb T)$ functions with $p \in (1, \infty)$.

Collectively, Kolmogorov's papers \cite{Kol1, Kol2} and the Carleson--Hunt theorem
\cite{Carleson, Hunt} reveal that Kolmogorov's examples are obstructions which characterize 
$L^1(\mathbb T)$-space phenomena for Fourier series. This advances a
natural question: how large topologically (in the sense of Baire
category) is the set of all integrable functions
$f \in L^1(\mathbb{T})$ whose Fourier series diverges almost
everywhere on $\mathbb{T}$?

In this short note, we give an ad-hoc argument based on Kolmogorov's methods \cite{Kol1, Kol2}, (see Proposition \ref{prop:1} below), illustrating that the set of integrable functions whose Fourier series diverge almost everywhere on $\mathbb T$ is topologically generic in $L^1(\mathbb T)$. In other words, this property holds on a residual set, i.e., a countable intersection of dense open sets. Loosely speaking, Kolmogorov's pathological examples are topologically quite common in $L^1(\mathbb T)$. 

\section*{Acknowledgments} We thank Leonidas Daskalakis, Jacek Dziuba{\'n}ski, Dariusz Kosz and Jim Wright  for helpful comments on an earlier draft of this paper. 

\subsection{Notation and statement of the main result}
Throughout the paper the set of positive integers and the set of nonnegative
integers will be denoted, respectively, by $\mathbb Z_+ \coloneqq \{1, 2, \ldots\}$ and
$\mathbb N \coloneqq \{0,1,2,\ldots\}$. The sets $\mathbb Z$, $\mathbb R$, and   $\mathbb C$ have their standard meaning. We also denote $\mathbb R_+ \coloneqq (0, \infty)$.  The torus $\mathbb{T} := \mathbb{R}/\mathbb{Z}$ is endowed with the Lebesgue measure $d\mu_{\mathbb T}(x):=dx$ inherited from the real line $\mathbb{R}$, and $\mu_{\mathbb T}(E):=\int_Edx$ denotes the Lebesgue measure of a Lebesgue measurable set $E \subseteq \mathbb{T}$.

All vector spaces here will be defined over the complex numbers $\mathbb C$.
The space of all measurable functions $f:\mathbb T\to \mathbb C$ whose modulus is integrable
with $p$-th power is denoted by $L^p(\mathbb T)$ for $p\in[1, \infty)$,
whereas $L^{\infty}(\mathbb T)$ denotes the space of all essentially bounded
measurable functions on $\mathbb T$.

Let $e(x):=e^{2\pi i x}$ for $x\in \mathbb R$. For $f\in L^1(\mathbb T)$ the Fourier coefficients are defined by
\[
\widehat{f}(n):=\int_{\mathbb T}f(x)e(-x n)dx \quad \text{ for } \quad n\in\mathbb Z.
\]
 The partial sums of the Fourier series corresponding to $f\in L^1(\mathbb T)$ are defined by
\[
S_N(f)(x):=\sum_{n\in[-N, N]\cap \mathbb Z}\widehat{f}(n)e(x n)\quad \text{ for } \quad x\in\mathbb T, \ N\in\mathbb N.
\]
Using the Dirichlet kernel $D_N(x):=\sum_{n\in[-N, N]\cap \mathbb Z}e(x n)$, we see that
\[
S_N(f)(x)=D_N*f(x)=\int_{\mathbb T}f(x-y)D_N(y)dy.
\]

The main theorem of the paper reads as follows.

\begin{theorem}
\label{thm:1}
The set  
\begin{align}
\label{eq:1}
\mathcal D:=\big\{f\in L^1(\mathbb T): \mu_{\mathbb T}(\{x\in\mathbb T: \sup_{N\in\mathbb N}|D_N*f(x)|=\infty\})=1\big\}
\end{align}
is of the second Baire category in 
$L^1(\mathbb T)$. In particular, the set of all integrable functions whose Fourier series diverges almost everywhere on $\mathbb T$ is of the second Baire category  in $L^1(\mathbb T)$.
\end{theorem}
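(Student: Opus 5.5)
We will show that $\mathcal D$ is in fact residual, i.e.\ it contains a dense $G_\delta$ subset of $L^1(\mathbb T)$; second category then follows from the Baire category theorem, since $L^1(\mathbb T)$ is complete. The second assertion of Theorem \ref{thm:1} follows at once. If $\sup_N|D_N*f(x)|=\infty$, then the partial sums $S_N(f)(x)$ cannot converge at $x$.

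\emph{Step 1: the $G_\delta$ structure.} For $M,k\in\mathbb Z_+$ and $K\in\mathbb N$ set
\[
E_{M,K}(f):=\{x\in\mathbb T:\max_{0\le N\le K}|D_N*f(x)|>M\}.
\]
Define $\mathcal U_{M,k}:=\{f\in L^1(\mathbb T): \exists K,\ \mu_{\mathbb T}(E_{M,K}(f))>1-1/k\}$.

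We first check that each $\mathcal U_{M,k}$ is open. For fixed $N$, the map $f\mapsto D_N*f$ is continuous from $L^1$ into $C(\mathbb T)$, with $\|D_N*f\|_\infty\le(2N+1)\|f\|_1$. Suppose $\mu(E_{M,K}(f))>1-1/k$. Choose $\eta>0$ so small that $\mu(\{\max_{N\le K}|D_N*f|>M+\eta\})>1-1/k$; this is possible by continuity of measure as $\eta\downarrow0$. Then any $g$ with $\|g-f\|_1<\eta/(2K+1)$ satisfies $E_{M,K}(g)\supseteq\{\max_{N\le K}|D_N*f|>M+\eta\}$, so $g\in\mathcal U_{M,k}$.

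Next we check that $\bigcap_{M,k}\mathcal U_{M,k}\subseteq\mathcal D$. Take $f$ in the intersection. Since the sets $E_{M,K}(f)$ increase in $K$, we have $\mu(\{\sup_N|D_N*f|>M\})\ge1-1/k$ for every $k$, hence this set has full measure. Intersecting over $M$ gives $\mu(\{\sup_N|D_N*f|=\infty\})=1$.

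\emph{Step 2: density.} This is the main obstacle; here Kolmogorov's construction enters, presumably in the form of Proposition \ref{prop:1}. What we need is the following. For every $M,k$ there exists $h\in L^1(\mathbb T)$, in fact a trigonometric polynomial or a finite combination of Fejér-type kernels, with $\|h\|_1\le1$ and $\mu(\{\sup_N|D_N*h|>M\})>1-1/k$. Kolmogorov's construction supplies exactly this: a probability-like measure $\mu_n=\frac1n\sum_j\delta_{x_j}$ with rationally independent-ish points $x_j$. Its maximal partial sums satisfy $\sup_N|D_N*\mu_n|\gtrsim\log n$ on a set of measure close to $1$. Smoothing with a Fejér kernel of large order turns $\mu_n$ into an $L^1$ function of norm $\le1$ while keeping this estimate.

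Given such $h$, we prove density as follows. Let $f\in L^1$ and $\varepsilon>0$. Pick a trigonometric polynomial $p$ with $\|f-p\|_1<\varepsilon/2$; then $\sup_N|D_N*p|\le C_p$ is bounded. Now take $g=p+\frac{\varepsilon}{2}h$, where $h$ is chosen for the threshold $M'=2(M+C_p)/\varepsilon$. Then $\|g-f\|_1<\varepsilon$. Moreover, on a set of measure $>1-1/k$ we have $\sup_N|D_N*g|\ge\frac\varepsilon2 M'-C_p>M$. Since a supremum exceeding $M$ on a set is already witnessed by finitely many $N$ up to small measure loss, some finite $K$ works, so $g\in\mathcal U_{M,k}$.

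\emph{Conclusion.} Thus each $\mathcal U_{M,k}$ is dense and open, and $\mathcal D$ contains a dense $G_\delta$. The only nontrivial input is the quantitative Kolmogorov estimate in Step 2. The point to check carefully there is that the smoothing of $\mu_n$ keeps the $L^1$ norm bounded by $1$. It must also not destroy the lower bound $\sup_N|D_N*\cdot|\gtrsim\log n$ on most of $\mathbb T$; it suffices to use the partial sums with $N$ well below the Fejér order, where the smoothing changes the Fourier coefficients only slightly.
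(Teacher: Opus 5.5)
Your proof is correct and is the paper's argument in complementary form. Your open dense sets $\mathcal U_{M,k}$ are, for $k\ge 2$, exactly the complements of the paper's closed nowhere dense sets $F(k-1,M)$, and your density step is the paper's Lemma \ref{lem:2}: perturb a trigonometric polynomial by a small multiple of Kolmogorov's polynomial from Proposition \ref{prop:1}. The one small difference is in proving openness. Your uniform bound $\|D_N*(g-f)\|_{L^\infty}\le(2N+1)\|g-f\|_{L^1}$ is slightly cleaner than the Chebyshev/exceptional-set argument of Lemma \ref{lem:1}, since it needs no exceptional set.
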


From Theorem \ref{thm:1} it follows immediately that there exist integrable
functions whose Fourier series diverge almost everywhere, in fact with unbounded partial sums; however, our
proof should not be regarded as an alternative method for producing
Kolmogorov's examples \cite{Kol1, Kol2}, but rather as a complement to his approach,
providing a topological characterization of this phenomenon. 

Interestingly, Marcinkiewicz \cite{M} constructed an integrable function on $\mathbb T$ whose Fourier series diverges almost everywhere, yet whose partial sums are bounded at every point of $\mathbb T$.

We will prove Theorem \ref{thm:1} by showing that $\mathcal D^c$ is of the first category in the sense of Baire category, in other words it can be written as a countable union of nowhere dense sets in $L^1(\mathbb T)$. 

A key tool will be the following result of Kolmogorov \cite{Kol1, Kol2}.
\begin{proposition}
\label{prop:1}
For each $M\in\mathbb R_+$ there exists a trigonometric polynomial
$g_M$ and a measurable set $G_M \subseteq \mathbb T$ with measure
$\mu_{\mathbb T}(G_M) > 1 - 2^{-M}$ such that
$\|g_M\|_{L^1(\mathbb T)}=1$, and satisfying
\begin{align}
\label{eq:5}
\inf_{x\in G_M}\sup_{k\in\mathbb Z_+}|D_k*g_M(x)|>2^M.
\end{align}
\end{proposition}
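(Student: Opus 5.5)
The plan is to follow the Kolmogorov--Stein approach, in the form presented in Grafakos: first build a discrete probability measure whose partial Fourier sums are large everywhere, then truncate to finitely many partial sums, and finally smooth with the Fejér kernel to obtain a trigonometric polynomial.

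\emph{Step 1 (discrete measure).} Fix $n\in\mathbb Z_+$, put $x_j:=2j/(2n+1)$ modulo $1$ for $j=1,\dots,2n+1$, and let $\mu_n:=\frac{1}{2n+1}\sum_j\delta_{x_j}$. This is a probability measure. Since $2$ is invertible modulo $2n+1$, the points $x_j$ are just the equally spaced points $\{j/(2n+1)\}$ in a different order. Using $D_N(y)=\sin((2N+1)\pi y)/\sin(\pi y)$, I would restrict to $N$ with $2N+1=(2n+1)(2l+1)$, $l\in\mathbb N$. For such $N$, $(2N+1)x_j$ is an even integer, so $\sin((2N+1)\pi(x-x_j))=\sin((2N+1)\pi x)$ for every $j$. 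Hence
\[
D_N*\mu_n(x)=\sin\big((2l+1)\pi(2n+1)x\big)\cdot\frac{1}{2n+1}\sum_{j}\frac{1}{\sin(\pi(x-x_j))}.
\]
Represent $x-x_j$ in $(0,1)$. Then every summand is positive. Comparing with the harmonic series over the equally spaced points gives a lower bound $\geq c\log n$ for all $x\notin\{x_j\}$. Next, for $x$ with $(2n+1)x$ irrational (almost every $x$), the odd multiples $(2l+1)(2n+1)x$ are dense modulo $2$. So some $l$ gives $|\sin((2l+1)(2n+1)\pi x)|\ge 1/2$, and therefore $\sup_N|D_N*\mu_n(x)|\ge \tfrac c2\log n$ for almost every $x$.

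\emph{Step 2 (finitely many $N$).} The sets $E_L:=\{x:\max_{N\le L}|D_N*\mu_n(x)|>\tfrac c2\log n-1\}$ increase to a set of full measure as $L\to\infty$. Choose $n$ with $\tfrac c2\log n-2>2^M$, and then $L$ with $\mu_{\mathbb T}(E_L)>1-2^{-M}$.

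\emph{Step 3 (Fejér smoothing).} Let $g_M:=K_R*\mu_n$, where $K_R\ge0$ is the Fejér kernel. Then $g_M$ is a trigonometric polynomial, and since $g_M\ge0$ with $\int g_M=\widehat{\mu_n}(0)=1$, we get $\|g_M\|_{L^1(\mathbb T)}=1$ exactly. For $k\le L$,
\[
|D_k*g_M(x)-D_k*\mu_n(x)|\le\sum_{|m|\le k}\frac{|m|}{R+1}|\widehat{\mu_n}(m)|\le \frac{L^2}{R},
\]
using $|\widehat{\mu_n}(m)|\le1$. This error is below $1$ for $R$ large. Taking $G_M:=E_L$ minus a null set (and using $\mathbb Z_+$ versus $\mathbb N$ only cosmetically: the single term $N=0$ is bounded by $1$, which is absorbed into the margin) yields \eqref{eq:5}.

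\emph{Main obstacle.} The delicate point is Step 1: arranging the phases $\sin((2N+1)\pi(x-x_j))$ to be simultaneously independent of $j$. This is what the choice of points $2j/(2n+1)$ and of $N$ in a fixed arithmetic progression achieves. The other essential ingredient is the uniform $\log n$ lower bound for the positive sum $\frac{1}{2n+1}\sum_j 1/\sin(\pi(x-x_j))$.
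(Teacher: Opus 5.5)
Your Step 1 is false, and the whole proof rests on it. The identity $\sin((2N+1)\pi(x-x_j))=\sin((2N+1)\pi x)$ holds only for the representatives $x_j=2j/(2n+1)\in(0,2]$. Because $2N+1$ is odd, shifting $x_j$ by an odd integer flips the sign of the numerator. Putting $x-x_j$ into $(0,1)$ requires exactly such a shift for the roughly $n$ indices with $x_j\in(x,x+1)$. So you cannot have a common numerator and positive denominators at the same time.

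With consistent representatives the summands take both signs and cancel. In fact $\frac{1}{2n+1}\sum_j 1/\sin(\pi(x-x_j))=1/\sin((2n+1)\pi x)$; for $n=1$, $x=1/2$ the three summands are $-2,-2,1$. Your own remark that $\mu_n$ is the uniform measure on $\frac{1}{2n+1}\mathbb Z/\mathbb Z$ shows this directly. Then $\widehat{\mu_n}(m)=1$ if $(2n+1)\mid m$ and $0$ otherwise, so $D_N*\mu_n(x)=D_{\lfloor N/(2n+1)\rfloor}((2n+1)x)$, and hence
\[
\sup_{N\in\mathbb N}|D_N*\mu_n(x)|\le\frac{1}{|\sin((2n+1)\pi x)|},
\qquad
\mu_{\mathbb T}\big(\{x\in\mathbb T:\sup_{N\in\mathbb N}|D_N*\mu_n(x)|>\lambda\}\big)\le\frac{1}{\lambda}
\]
for every $n$. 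There is no $\log n$ growth at all. The set where the supremum exceeds $2^M$ has measure at most $2^{-M}$, so Steps 2 and 3 have nothing to transfer.

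Keeping only Kolmogorov's $n$ points $2j/(2n+1)$, $1\le j\le n$, does not rescue the common-phase idea. Then $1/\sin(\pi(x-x_j))$ is positive for $x_j<x$ and negative for $x_j>x$, and for typical $x$ the two sides cancel down to $O(1)$.

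The missing idea is a mechanism that defeats this sign change. Making the phases independent of $j$ is the wrong target, because the denominators are not one-signed. There are two standard repairs.

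First, the standard textbook argument (Stein's simplification, as in the source the paper cites) abandons rational points. Take $x_1,\dots,x_n$ with $|x_j-j/n|$ very small and $1,x_1,\dots,x_n$ linearly independent over $\mathbb Q$. Then $1,x-x_1,\dots,x-x_n$ are independent for all but countably many $x$. Kronecker's theorem then gives $N$ such that, for every $j$ simultaneously, $\sin((2N+1)\pi(x-x_j))$ has the sign of $\sin(\pi(x-x_j))$ and modulus at least $1/2$. Hence $|D_N*\mu_n(x)|\ge\frac{1}{2n}\sum_j|\sin(\pi(x-x_j))|^{-1}\ge c\log n$.

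Second, Kolmogorov's original device replaces $\delta_{x_j}$ by Fejér kernels $K_{m_j}(\cdot-x_j)$ of rapidly increasing orders. For $m_k<N\ll m_{k+1}$, the bumps with $j\le k$ are reproduced exactly and are small away from $x_j$, while those with $j>k$ behave like $D_N(\cdot-x_j)$. For $x\in(x_k,x_{k+1})$ these active bumps all lie on one side of $x$. Your arithmetic-progression trick then produces a one-signed sum of size $\gtrsim\log n$ off a set of small measure.

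With either version of Step 1, your Steps 2 and 3 (truncation and Fejér smoothing) go through. There is one harmless slip: the Fejér error is $k(k+1)/(R+1)$, not $\le L^2/R$.
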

\begin{proof}
For a detailed proof we refer to \cite[Lemma 4.2.4., p.258]{Grafakos1}.
\end{proof}
Proposition \ref{prop:1} (see also \cite[Chapter VIII]{Z}) was a key mechanism behind the construction of Kolmogorov's \cite{Kol1, Kol2} explicit examples; it will also be essential in our categorical argument.

For $m, n\in\mathbb Z_+$, define
\begin{align}
\label{eq:4}
F(m, n):=\Big\{f\in L^1(\mathbb T): \mu_{\mathbb T}(\{x\in\mathbb T: \sup_{N\in\mathbb N}|D_N*f(x)|>n\})\le  \frac{m}{m+1}\Big\}.
\end{align}

\begin{lemma}
\label{lem:1}
The sets $F(m, n)$ defined in \eqref{eq:4} are closed in $L^1(\mathbb T)$ for every $m, n\in\mathbb Z_+$.
\end{lemma}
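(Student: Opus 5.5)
Fix $m,n\in\mathbb Z_+$ and let $f_j\in F(m,n)$ converge to $f$ in $L^1(\mathbb T)$. We must show $\mu_{\mathbb T}(E)\le \frac{m}{m+1}$, where
\[
E:=\{x\in\mathbb T:\sup_{N\in\mathbb N}|D_N*f(x)|>n\}.
\]
The key point is that for each \emph{fixed} $N$ the map $f\mapsto D_N*f$ is continuous from $L^1(\mathbb T)$ into $L^\infty(\mathbb T)$, and indeed pointwise, because $D_N$ is a trigonometric polynomial. Concretely,
\[
|D_N*f(x)-D_N*f_j(x)|\le \|D_N\|_{L^\infty(\mathbb T)}\|f-f_j\|_{L^1(\mathbb T)}\le (2N+1)\|f-f_j\|_{L^1(\mathbb T)}
\]
for every $x\in\mathbb T$. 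Moreover, $D_N*f$ is a continuous function, since it is a trigonometric polynomial with coefficients $\widehat f(k)$. So the convergence is uniform for each fixed $N$, though not uniform in $N$.

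To handle the supremum over all $N$, we exhaust $E$ by finite truncations. Write $E=\bigcup_{K\in\mathbb N}E_K$, where
\[
E_K:=\{x:\max_{0\le N\le K}|D_N*f(x)|>n\}
\]
is an increasing family of sets. By continuity of measure, it suffices to show $\mu_{\mathbb T}(E_K)\le \frac{m}{m+1}$ for each $K$.

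Fix $K$ and $x\in E_K$. There is $N\le K$ with $|D_N*f(x)|>n$. By the uniform estimate above, $|D_N*f_j(x)|>n$ for all $j$ large enough, depending on $x$. Hence
\[
\mathbb 1_{E_K}(x)\le \liminf_{j\to\infty}\mathbb 1_{E^{(j)}}(x),\qquad E^{(j)}:=\{\sup_N|D_N*f_j|>n\}.
\]
Fatou's lemma then gives
\[
\mu_{\mathbb T}(E_K)\le\liminf_{j\to\infty}\mu_{\mathbb T}(E^{(j)})\le \frac{m}{m+1}.
\]
In fact the uniform estimate makes this even simpler: since $\max_{N\le K}|D_N*f-D_N*f_j|\le (2K+1)\|f-f_j\|_{L^1(\mathbb T)}\to0$, we get $E_K\subseteq\bigcup_{j\ge J}\bigcap_{i\ge j}E^{(i)}$, and the measure bound follows directly. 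Letting $K\to\infty$ yields $\mu_{\mathbb T}(E)\le\frac{m}{m+1}$, so $f\in F(m,n)$.

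The only delicate step is the interchange of the supremum over $N$ with the limit in $j$. This is where the strict inequality "$>n$" in the definition matters: the set $\{\sup_N|D_N*f|>n\}$ is a union of open-type conditions, so it behaves lower semicontinuously under the approximation. The truncation to $N\le K$ followed by monotone convergence resolves it, and no maximal inequality is needed. Measurability of all sets involved is clear, since each $D_N*f$ is continuous.
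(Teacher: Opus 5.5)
\noindent Your proof is correct. It shares the paper's overall architecture: truncate the supremum to finitely many $N$, then use continuity of measure from below. The comparison step, however, is done differently.

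The paper shows directly that $F(m,n)^c$ is open. For $f\notin F(m,n)$ it fixes $N$ with $\mu_{\mathbb T}(E_N(f))>\frac{m}{m+1}$, then a margin $\delta>0$ with $\mu_{\mathbb T}(E_{N,\delta}(f))>\frac{m}{m+1}+\delta$. It then uses Chebyshev together with Young's inequality $\|D_k*(f-g)\|_{L^1(\mathbb T)}\le\|D_k\|_{L^1(\mathbb T)}\|f-g\|_{L^1(\mathbb T)}$ to show that, for $\|f-g\|_{L^1(\mathbb T)}<\eta$, the margin survives off an exceptional set of measure $<\delta$.

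You argue sequentially instead. You use $\|D_N\|_{L^\infty(\mathbb T)}=2N+1$ to get $D_N*f_j\to D_N*f$ at every point, obtain $E_K\subseteq\liminf_j E^{(j)}$ from the strict inequality $>n$, and finish with Fatou.

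Your route dispenses with the margin and the exceptional-set bookkeeping, because ``eventually in $j$'' is allowed to depend on $x$. The paper's route gives an explicit neighbourhood radius. It also uses only that each map $g\mapsto D_k*g$ is continuous from the $L^1(\mathbb T)$ norm to convergence in measure, so it would apply verbatim to operators without bounded kernels. Your argument would then first need an a.e.\ convergent subsequence, which is harmless.

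Two small remarks. First, the ``even simpler'' sentence in your third paragraph is only a restatement, since $\bigcup_{j\ge J}\bigcap_{i\ge j}E^{(i)}=\liminf_i E^{(i)}$ for any $J$. Uniform convergence over $N\le K$ alone does not place $E_K$ inside a single $E^{(j)}$; that is exactly what the paper's $E_{N,\delta}(f)$ is for. Second, your indexing $0\le N\le K$ matches the definition of $F(m,n)$ better than the paper does. The paper's $E_N(f)$ runs over $1\le k\le N$, so strictly speaking $\bigcup_N E_N(f)$ misses the points where only the $k=0$ term $|\widehat f(0)|>n$ exceeds $n$. This is a slip in the paper, trivially fixed by including $k=0$.
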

\begin{proof}
Fix $m, n\in\mathbb Z_+$, equivalently it suffices to show that $F(m, n)^c$ is open in $L^1(\mathbb T)$. Pick $f\in F(m, n)^c$ and we show that there exists $\eta>0$ such that for any $g\in L^1(\mathbb T)$  when $\|f-g\|_{L^1(\mathbb T)} < \eta$, then $g\in F(m, n)^c$. Define two sets
\begin{align*}
E_N(f)&:=\big\{x \in\mathbb T: \sup_{1 \leq k \leq N}|D_k * f(x)|>n\big\},\\
E_\infty(f)&:=\big\{x \in\mathbb T: \sup_{k\in\mathbb N}|D_k * f(x)|>n\big\}.
\end{align*}
Observe that $E_N$ is an increasing sequence of sets, that is
$E_{N} \subseteq E_{N+1}$. Using continuity from below of the measure,
we have $\lim_{N\to \infty}\mu_{\mathbb T}(E_N(f)) = \mu_{\mathbb T}(E_\infty(f))$. Since
$f\in F(m, n)^c$, it follows that there exists some
$N \in \mathbb N$ such that $\mu_{\mathbb T}(E_N(f)) > \frac{m}{m+1}$. Define another set
\begin{align*}
E_{N, \delta}(f):=\big\{x \in\mathbb T: \sup_{1 \leq k \leq N}|D_k * f(x)| > n+\delta\big\}.
\end{align*}
We note that  $E_{N, \delta_2}(f)\subseteq E_{N, \delta_1}(f)$ for $\delta_2>\delta_1>0$. Hence, we can choose $\delta>0$ such that
$\mu_{\mathbb T}(E_{N, \delta}(f)) > \frac{m}{m+1}+\delta$. Set $\eta:= \min_{1 \leq k \leq N} \delta^22^{-k}\|D_k\|_{L^1(\mathbb T)}^{-1}$ and observe that for any $g\in L^1(\mathbb T)$  satisfying $\|f-g\|_{L^1(\mathbb T)} < \eta$, we obtain, for any $k\in\{1, \ldots, N\}$, by Chebyshev's inequality followed by Young's convolution inequality that
\begin{align*}
\mu_{\mathbb T}\big(\big\{x \in\mathbb T: |D_k *(f-g)(x)| \geq \delta\big\}\big)\le \frac{1}{\delta}\|D_k\|_{L^1(\mathbb T)}\|f-g\|_{L^1(\mathbb T)}<\frac{\delta}{2^k}.
\end{align*}
 Taking
\begin{align*}
B:=\bigcup_{k=1}^N\{x\in \mathbb T:\left|D_k *(f-g)(x)\right| \geq \delta\},
\end{align*}
we see that $\mu_{\mathbb T}(B)\le \sum_{k=1}^N\frac{\delta}{2^k}<\delta$. Moreover,  we have
\begin{align*}
E_{N, \delta}(f)\subseteq \big\{x \in\mathbb T: \sup_{1 \leq k \leq N}|D_k * g(x)|>n\big\}\cup B.
\end{align*}
Therefore, $\mu_{\mathbb T}(E_{N}(g))\ge \mu_{\mathbb T}(E_{N, \delta}(f))-\mu_{\mathbb T}(B)> \frac{m}{m+1}+\delta-\delta=\frac{m}{m+1}$, which implies that $g\in F(m, n)^c$. This in turn yields that $F(m, n)^c$ is open and the proof of the lemma follows. 
\end{proof}

\begin{lemma}
\label{lem:2}
The sets $F(m, n)$ defined in \eqref{eq:4} have empty interiors for every $m, n\in\mathbb Z_+$.
\end{lemma}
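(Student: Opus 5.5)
Fix $m,n\in\mathbb Z_+$, $f\in F(m,n)$ and $\varepsilon>0$. We will produce $h\in L^1(\mathbb T)$ with $\|f-h\|_{L^1(\mathbb T)}<\varepsilon$ and $h\notin F(m,n)$; this shows that no $L^1$-ball around $f$ lies inside $F(m,n)$.

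The first step is to reduce to a trigonometric polynomial. Since trigonometric polynomials are dense in $L^1(\mathbb T)$, choose one, $P$, with $\|f-P\|_{L^1(\mathbb T)}<\varepsilon/2$. Let $d=\deg P$. For $N\ge d$ we have $D_N*P=P$, so $\sup_{N}|D_N*P|\le \sum_{|j|\le d}|\widehat P(j)|=:C_P<\infty$ everywhere on $\mathbb T$. The point is that $P$ contributes only a bounded perturbation to every partial sum.

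Next we perturb by a scaled Kolmogorov polynomial. Take $h:=P+\frac{\varepsilon}{2}\,g_M$ with $g_M,G_M$ from Proposition \ref{prop:1}, and $M$ still to be chosen. Then $\|f-h\|_{L^1(\mathbb T)}<\varepsilon/2+\varepsilon/2=\varepsilon$. For $x\in G_M$, Proposition \ref{prop:1} gives some $k\in\mathbb Z_+$ with $|D_k*g_M(x)|>2^M$. By linearity and the triangle inequality,
\[
|D_k*h(x)|\ge \tfrac{\varepsilon}{2}|D_k*g_M(x)|-|D_k*P(x)|>\tfrac{\varepsilon}{2}2^M-C_P .
\]
Here we use the fact that $|D_k*P(x)|\le C_P$ for every $k$, not only for $k\ge d$. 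This holds because $D_k*P$ is a partial sum of $P$'s coefficients, so its modulus is at most $\sum|\widehat P(j)|$.

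Finally we choose $M$ large enough that two conditions hold: $\frac{\varepsilon}{2}2^M-C_P\ge n$, and $2^{-M}<\frac{1}{m+1}$. Then $G_M\subseteq\{x:\sup_N|D_N*h(x)|>n\}$. This set therefore has measure at least $\mu_{\mathbb T}(G_M)>1-2^{-M}>\frac{m}{m+1}$, so $h\notin F(m,n)$.

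I expect no real obstacle: the whole difficulty is packaged in Proposition \ref{prop:1}. The only care needed is the uniform bound on $|D_k*P|$ over all $k$, including $k<d$. It also matters that $C_P$ is fixed before $M$ is chosen, so that the blow-up $2^M$ can absorb both $C_P$ and the small factor $\varepsilon/2$.
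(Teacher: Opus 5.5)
Your proof is correct and follows essentially the same route as the paper. You approximate $f$ by a trigonometric polynomial whose partial sums are uniformly bounded, add a small multiple of Kolmogorov's polynomial $g_M$ from Proposition \ref{prop:1}, and use $G_M$ to push the measure of the level set above $\frac{m}{m+1}$; the only cosmetic difference is that you scale $g_M$ by the fixed factor $\varepsilon/2$ and let $2^M$ absorb it, whereas the paper scales by $2^{-M/2}$ and requires $2^{-M/2}<\varepsilon/2$ and $2^{M/2}>n+K$.
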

\begin{proof}
We fix $m, n\in\mathbb Z_+$ and we will show that $F(m, n)$ has empty interior. Equivalently, we prove that for all $f \in F(m, n)$ and for all $\varepsilon>0$, there exists $h \in L^1(\mathbb T)$ such that $\|f-h\|_{L^1(\mathbb T)} < \varepsilon$ and $h \not \in F(m, n)$.  By a simple density argument, there exists a trigonometric polynomial  $g\in L^1(\mathbb T)$ such that $\|f-g\|_{L^1(\mathbb T)} < \varepsilon/2$. Then there exists a finite constant $K\in\mathbb R_+$ such that
\begin{align*}
\sup_{x\in\mathbb T}\sup _{N \in \mathbb Z_+}|D_N * g(x)| \leq K.
\end{align*} 
Choose $M\in\mathbb Z_+$ so that 
\begin{align*}
2^{-M/2} < \frac{\varepsilon}{2},
\quad \text{ and } \quad
2^{M/2} > n+K,
\quad \text{ and } \quad
1-2^{-M}>\frac{m}{m+1}.
\end{align*}
Define the perturbed function
$h:= g + 2^{-M/2}g_M$, where $g_M$ is a polynomial as in Proposition \ref{prop:1}. By our choice of $M\in\mathbb Z_+$ we see that $\|f-h\|_{L^1(\mathbb T)} < \varepsilon$, since $\|g_M\|_{L^1(\mathbb T)}=1$ and
\begin{align*}
\|f-h\|_{L^1(\mathbb T)} \leq  \|f-g\|_{L^1(\mathbb T)}  + \|g-h\|_{L^1(\mathbb T)}  =  \|f-g\|_{L^1(\mathbb T)}+ 2^{-M/2} \|g_M\|_{L^1(\mathbb T)} < \frac{\varepsilon}{2}+ 2^{-M/2}.
\end{align*}
It suffices to show that $h \not \in F(m, n)$. Indeed, if $x \in  G_M$, then by the triangle inequality, we have
\begin{align*}
\sup_{k\in\mathbb Z_+}|D_k * h(x)|&=\sup_{k\in\mathbb Z_+}|D_k * (g+2^{-M / 2} g_M)(x)| \\
&\geq 2^{-M / 2} \sup_{k\in\mathbb Z_+}|D_k *  g_M(x)|-\sup_{k\in\mathbb Z_+}|D_k * g(x)| \\
& > 2^{-M/2} 2^M - K = 2^{M/2} -K.
\end{align*}
By our choice of $M\in\mathbb Z_+$, we conclude that
\begin{align*}
\sup_{k\in\mathbb Z_+}|D_k * h(x)| > n,
\end{align*}
which also guarantees that  $G_M \subseteq \{x\in\mathbb T : \sup_{k\in\mathbb Z_+}|D_k  * h)(x)| >n\}$, and yields
\begin{align*}
\mu_{\mathbb T}\big(\big\{x \in\mathbb T: \sup_{k\in\mathbb Z_+}|D_k  * h(x)|>n\big\}\big)\ge \mu_{\mathbb T}(G_M)>1-2^{-M}>\frac{m}{m+1}.
\end{align*}
Therefore, $h \not \in F(m, n)$ and consequently  $F(m, n)$ has empty interior as desired.
\end{proof}

\begin{proof}[Proof of Theorem \ref{thm:1}]
Observe that
\begin{align*}
\mathcal D^c= \big\{f\in L^1(\mathbb T): \mu_{\mathbb T}(\{x\in\mathbb T: \sup_{N\in\mathbb N}|D_N*f(x)|=\infty\})<1\big\}.
\end{align*}
For $f\in L^1(\mathbb T)$, define
\begin{align*}
A_n:=\{x \in\mathbb T: \sup _{N \in \mathbb{N}}|D_N * f(x)|>n\} \quad \text{ for } \quad n\in\mathbb Z_+.
\end{align*}
Observe that $A_n$ is a decreasing sequence of sets, that is $A_{n+1} \subseteq A_n$. Further $\mu_{\mathbb T}(A_1) \leq 1$. So, we may invoke continuity from above of the measure to see that
\begin{align*}
\mu_{\mathbb T}(\{x\in\mathbb T: \sup_{N\in\mathbb N}|D_N*f(x)|=\infty\})=\lim_{n\to \infty}\mu_{\mathbb T}(A_n)\le 1.
\end{align*}
Hence, we obtain
\begin{align*}
\mathcal D^c\subseteq \bigcup_{n\in\mathbb Z_+} \big\{f\in L^1(\mathbb T): \mu_{\mathbb T}(\{x\in\mathbb T: \sup_{N\in\mathbb N}|D_N*f(x)|>n\})<1\big\}.
\end{align*}
Consequently, using the sets $F(m, n)$ from \eqref{eq:4}, we can further write
\begin{align}
\label{eq:3}
\mathcal D^c\subseteq \bigcup_{n\in\mathbb Z_+}\bigcup_{m\in\mathbb Z_+} F(m, n).
\end{align}
The set $\bigcup_{n\in\mathbb Z_+}\bigcup_{m\in\mathbb Z_+} F(m, n)$ is of the first category, since each
$F(m, n)$ is closed and has empty interior by Lemma \ref{lem:1} and Lemma \ref{lem:2}.
This ensures that $\mathcal D^c$ is of the first
category as a subset of a first category set  in \eqref{eq:3}. 
The proof of Theorem \ref{thm:1} now follows.
\end{proof}

\end{document}